\newtheorem{theorem}{Theorem}
\newtheorem{definition}[theorem]{Definition}
\newtheorem{lemma}[theorem]{Lemma}
\newtheorem*{openproblem}{Open problem}
\theoremstyle{remark}
\newtheorem{remark}[theorem]{Remark}
\newtheorem{example}[theorem]{Example}
  \newcommand{\F}{\mathcal{F}}
 \newcommand{\W}{\mathcal{W}}
 \renewcommand{\phi}{\varphi}
\newcommand{\E}{\mathbb{E}}
\renewcommand{\P}{\mathbb{P}}
\newcommand{\N}{\mathbb{N}}
\newcommand{\R}{\mathbb{R}}
\renewcommand{\c}{\mathfrak c}
\newcommand{\be}{\begin{equation}}
\newcommand{\ee}{\end{equation}}
\newcommand{\bea}{\begin{eqnarray}}
\newcommand{\bes}{\begin{subequations}}
\newcommand{\ees}{\end{subequations}}
\newcommand{\bgt}{\begin{gather}}
\newcommand{\egt}{\begin{gather}}
\newcommand{\eea}{\end{eqnarray}}
\newcommand{\beaa}{\begin{eqnarray*}}
\newcommand{\eeaa}{\end{eqnarray*}}
\renewcommand{\W}{{\mathbb W}}
\newcommand{\1}{\mathbbm 1}
\renewcommand{\epsilon}{\varepsilon}
\DeclareMathOperator{\id}{Id}
\DeclareMathOperator{\proj}{proj}
\DeclareMathOperator{\Var}{Var}
\newcommand{\fourIdx}[5]{%
\setbox1=\hbox{\ensuremath{^{#1}}}%
 \setbox2=\hbox{\ensuremath{_{#2}}}%
 \setbox5=\hbox{\ensuremath{#5}}%
 \hspace{\ifnum\wd1>\wd2\wd1\else\wd2\fi}%
 \ensuremath{\copy5^{\hspace{-\wd1}\hspace{-\wd5}#1\hspace{\wd5}#3}%
 _{\hspace{-\wd2}\hspace{-\wd5}#2\hspace{\wd5}#4}%
 }}
\numberwithin{equation}{section}
\numberwithin{theorem}{section}
\renewcommand{\supset}{\supseteq}
\begin{document}

\title{Tranport estimates for random measures in dimension one}

\author{ Martin  Huesmann} \thanks{The author thanks Theo Sturm for valuable discussions and gratefully acknowledges funding by the CRC
  1060 and support by the Hausdorff center for mathematics.}  \date{\today}
  \begin{abstract}

 We show that there is a sharp threshold in dimension one for the transport cost between the Lebesgue measure $\lambda$ and an invariant random measure $\mu$ of unit intensity to be finite. We show that for \emph{any} such random measure the $L^1$ cost are infinite provided that the first central moments $\E[|n-\mu([0,n))|]$ diverge. Furthermore, we establish  simple and sharp criteria, based on the variance of $\mu([0,n)]$, for the $L^p$ cost to be finite for $0<p<1$.
 
\medskip

\noindent\emph{Keywords:} Optimal Transport, random measures, shift-coupling. \\
\emph{Mathematics Subject Classification (2010):} Primary 60G57, 60G55; Secondary 49Q20.
\end{abstract}
\maketitle
\section{Introduction}

In \cite{HuSt13, Hu15} it was shown that there is a unique optimal coupling between the Lebesgue measure $\lambda^d$ on $\R^d$ and an invariant random measure $\mu$ on $\R^d$ of unit intensity \emph{provided} that the \emph{asymptotic mean transportation cost}
\begin{align}\label{eq:cost1}
\mathfrak c_\infty = \liminf_{n\to\infty}\inf_{q\in \mathsf{Cpl}(\lambda^d,\mu)} \frac1{n^d} \E\left[\int_{\R^d\times [0,n)^d} \vartheta(|x-y|) \ q_\omega(dx,dy)\right] 
\end{align}
is finite, where $\mathsf{Cpl}(\lambda^d,\mu)$ denotes the set of all couplings between $\lambda^d$ and $\mu$ and $\vartheta:\R_+\to\R_+$ is a strictly increasing and diverging function. Moreover, as the optimal coupling $\hat q$ is concentrated on the graph of a random map $T$, i.e.\ $\hat q=(id,T)_*\lambda^d$, a posteriori it can be shown that 
\begin{align}\label{eq:cost2}\mathfrak c_\infty = \inf_{S, S_*\lambda^d=\mu}\E[\vartheta(|0-S(0)|)].\end{align}
In principle, these results give a blackbox construction of allocations and invariant couplings suitable for applications, e.g.\ modelling of cellular structure via Laguerre tessellation \cite{LaZu08} (and references therein) or the recent construction of unbiased shifts \cite{LaMoTh14}.
However, both conditions \eqref{eq:cost1} and \eqref{eq:cost2} are difficult to verify, mainly, because optimal couplings are highly non-local objects. For instance, consider the optimal semicoupling (cf.\ Section \ref{s:prel}) between $\lambda^d$ and a Poisson point process on $B_n=[0,n)^d$. It is an open problem to estimate the amount of mass that is transported from outsided of $B_n$ into $B_n$, for fixed $n$ as well as aysmptotically as $n$ tends to $\infty.$

The aim of this note is to give in dimension one sharp and easily checkable conditions for the asymptotic mean transportation cost to be finite. 
For ease of exposition, in this note we focus on $L^p$ cost, i.e.\ we consider $\vartheta_p(r):=r^p$ for $p>0$, and put
\begin{align*}
\mathfrak{c}_\infty(p) = \inf_{S, S_*\lambda^d=\mu}\E[\vartheta_p(|0-S(0)|)]=\inf_{S, S_*\lambda^d=\mu}\E[|S(0)|^p]. 
\end{align*}

We denote by $\Var(Z)$ the variance of a random variable $Z$. We say that a random  measure $\mu$ satisfies a CLT if the sequence $\left((\mu([0,n))-\E[\mu([0,n))])/\sqrt{\Var(\mu([0,n)))}\right)_n$ weakly converges to a standard normal distribution. We say a random measure $\mu$ has a regular variance if $f(n):=\Var(\mu([0,n)))$ satisfies 
$$\lim_{n\to\infty}\frac{a_n}{n}=0\quad \Rightarrow \quad \lim_{n\to\infty} \frac{f(a_n)}{f(n)}=0.$$
Our first result states

\begin{theorem}\label{thm:p<1} Fix $0<p<1$ and let $\mu$ be an invariant random measure of unit intensity.
 \begin{itemize}
  \item[i)] If $\limsup_{n\to\infty} \sqrt{\Var(\mu([0,n)))} \cdot n^{p-1}=0$, then $\c_\infty(q)<\infty$ for all $0<q<p.$
\item[ii)] Assume that $\mu$ has a regular variance and satisfies a CLT. If $\limsup_{n\to\infty} \sqrt{\Var(\mu([0,n)))} \cdot n^{p-1}>0$, then $\c_\infty(p)=\infty.$
 \end{itemize}
\end{theorem}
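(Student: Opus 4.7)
The plan is to establish Part (i) by an explicit dyadic hierarchical construction and Part (ii) by combining Markov's inequality with the central limit theorem.

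For \textbf{Part (i)}, I will build a translation-equivariant coupling $T$ between $\lambda$ and $\mu$ as follows. Draw an independent $U\sim\mathrm{Unif}[0,1)$ and, at each dyadic level $k\in\Z$, partition $\R$ into intervals $I^k_j:=2^kU+[j2^k,(j+1)2^k)$. Working from the coarsest level downwards, at level $k$ redistribute $\lambda$-mass between each pair $I^k_{2j},I^k_{2j+1}$ that makes up a level-$(k+1)$ cell so that, after the pass, the $\lambda$-mass of each sub-cell matches its $\mu$-mass. The mass moved at level $k$ within one pair equals $|\mu(I^k_{2j})-\tfrac12\mu(I^{k+1}_j)|$ and travels a distance at most $2^{k+1}$; by Chebyshev, its expected size is of order $\sigma_{2^k}$. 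Writing $D_k$ for the level-$k$ displacement of the origin, stationarity gives $\P(D_k>0)\le c\sigma_{2^k}/2^k$ while $D_k\le 2^{k+1}$, hence $\E[D_k^q]\le c\,2^{k(q-1)}\sigma_{2^k}$. Since $q<1$, subadditivity of $x\mapsto x^q$ together with $|T(0)|\le\sum_k D_k$ yields
\begin{equation*}
\E[|T(0)|^q]\le c\sum_{k\ge 0}2^{k(q-1)}\sigma_{2^k}+O(1).
\end{equation*}
The hypothesis $\sigma_n n^{p-1}\to 0$ reads $\sigma_{2^k}=o(2^{k(1-p)})$, so the summand is $o(2^{k(q-p)})$, which is summable for every $q<p$.

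For \textbf{Part (ii)}, I argue by contradiction. Suppose $M:=\mathfrak c_\infty(p)<\infty$, realized by some translation-equivariant coupling $T$, and set $A_n:=\lambda([0,n)\setminus T^{-1}([0,n)))$ and $B_n:=\lambda(T^{-1}([0,n))\setminus[0,n))$, so that $B_n-A_n=\mu([0,n))-n=:D_n$. For $x\in[0,n)$ with $T(x)\notin[0,n)$ one has $|T(x)-x|>\min(x,n-x)$, so Markov's inequality and stationarity give
\begin{equation*}
\E[A_n]\le 2\int_0^{n/2}\min\bigl\{1,M/x^p\bigr\}\,dx\le C_p M\,n^{1-p}.
\end{equation*}
On the other hand, the CLT together with uniform integrability (which follows from control of a slightly higher moment, provided by the regular-variance hypothesis) yields $\E[|D_n|]\gtrsim\sigma_n$ for large $n$, and since $\E[A_n]=\E[B_n]$ we have $\E[A_n]\ge\tfrac12\E[|D_n|]\gtrsim\sigma_n$. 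Combining the two bounds gives the single-scale estimate $M\gtrsim\sigma_n n^{p-1}$.

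The \emph{main obstacle} is that this pointwise bound only delivers $M\ge c\delta$ whenever $\sigma_n n^{p-1}\ge\delta$, which falls short of the required conclusion $M=\infty$. To amplify to $M=\infty$ one must accumulate contributions from infinitely many scales, and this is where the regular-variance hypothesis enters essentially. The idea is to extract from the limsup hypothesis a well-separated subsequence $n_k\uparrow\infty$ with $\sigma_{n_k}n_k^{p-1}\ge\delta$ and, using $\sigma_{a}=o(\sigma_{n_k})$ whenever $a=o(n_k)$, argue that the scale-$n_k$ displacement events occupy essentially disjoint slabs of the decomposition $\E[|T(0)|^p]=\int_0^\infty pt^{p-1}\P(|T(0)|>t)\,dt$, each contributing at least $c\delta$. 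This produces a divergent series and contradicts $M<\infty$.
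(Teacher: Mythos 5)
Your construction is, at its core, the same multiscale argument the paper uses: the paper also works along dyadic scales and bounds $\bar\c_{n+1}-\bar\c_n$ by the $\W_p$-cost of redistributing Lebesgue mass between the two halves of a block of length $2^{n+1}$ --- which is exactly your ``level-$k$ pass'' --- arriving at the same summability condition $\sum_k 2^{k(q-1)}\sigma_{2^k}<\infty$ with $\sigma_m:=\sqrt{\Var(\mu([0,m)))}$. Two points in your version need repair. First, the grids $I^k_j=2^kU+[j2^k,(j+1)2^k)$ are not nested across levels (you would need $2^kU=2^{k+1}U$); you either need a genuinely stationary nested hierarchy of $\R$, or you should work on $[0,2^N)$ and let $N\to\infty$, which is what the paper's telescoping of Wasserstein distances does implicitly. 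Second, the mass forced across the midpoint of $I^{k+1}_j$ at level $k$ equals $|\mu(I^k_{2j})-\tfrac12\mu(I^{k+1}_j)|$ only if the scheme re-uniformizes the transported Lebesgue mass inside each cell before descending to the next level; this has to be said (it costs nothing extra, since that motion also stays inside a cell of size $2^{k+1}$). With these fixes the estimate $\E[D_k^q]\le c\,2^{k(q-1)}\sigma_{2^k}$ and the conclusion are fine.

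\textbf{Part (ii).} Here your route is genuinely different from the paper's. The paper analyzes the finite-window optimal semicouplings $q_n$, exploits the monotone structure of optimal maps for concave costs on the overshoot intervals, and uses the regular-variance hypothesis to force the excess mass $\sim\sigma_n$ created by the CLT to travel a distance of order $n$. You instead work with the global invariant map and the flux identity $B_n-A_n=\mu([0,n))-n$. Your single-scale estimate is correct, and in fact simpler than you make it: no uniform integrability is needed, since the CLT alone gives $\E[|D_n|]\ge\sigma_n\,\E[(|D_n|/\sigma_n)\wedge 1]\ge c_0\sigma_n$ for large $n$ because $x\mapsto|x|\wedge 1$ is bounded and continuous. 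The genuine gap is the amplification from $M\ge c\delta$ to $M=\infty$, which you correctly identify as the crux but do not carry out; moreover the mechanism you propose for it (regular variance separating scales) is not what makes it work.

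Here is how to close that gap. Do not apply Markov's inequality; keep the exact bound $\E[A_n]\le 2\int_0^{n/2}G(t)\,dt$ with $G(t):=\P(|T(0)|>t)$. Combined with $\E[|D_n|]\le 2\E[A_n]$ and the CLT this gives, along the subsequence with $\sigma_{n_k}n_k^{p-1}\ge\delta$, the constraint $\int_0^{n_k/2}G(t)\,dt\ge c\delta\, n_k^{1-p}$. Since $G\le 1$ and $G$ is nonincreasing, pass to a sparser subsequence with $n_{k-1}\le c\delta\, n_k^{1-p}$; then
\begin{equation*}
\int_{n_{k-1}/2}^{n_k/2}p\,t^{p-1}G(t)\,dt\ \ge\ p\,(n_k/2)^{p-1}\Bigl(c\delta\, n_k^{1-p}-\tfrac{n_{k-1}}{2}\Bigr)\ \ge\ c'>0
\end{equation*}
uniformly in $k$, so the layer-cake formula $\E[|T(0)|^p]=\int_0^\infty p\,t^{p-1}G(t)\,dt$ diverges, contradicting $M<\infty$. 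Note that this completion uses only the CLT and the monotonicity of $G$, not the regular-variance hypothesis; written out carefully, your argument would therefore be somewhat stronger than the paper's version of part (ii). (You do still need, as the paper does, the results of \cite{Hu15} to pass from $\c_\infty(p)<\infty$ to the existence of an invariant transport map with $\E[|T(0)|^p]=\c_\infty(p)$.)
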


For the question of finiteness of $\c_\infty(p)$ or otherwise only the tail of $\vartheta_p$ is relevant. Therefore, $\c_\infty(p)=\infty$ implies $\c_\infty(p')=\infty$ for all $p'>p$ (see also \cite[Lemma 5.1]{HuSt13}).

\begin{remark}
 $\mu$ has a  regular variance, if for example  $f$ is convex (recall $f(n)=\Var(\mu([0,n)))$) or if $f$ is concave and there is $p>0$ such that $f^p$ is convex. Indeed, assume that $f$ is convex and assume for contradiction that $1\geq\liminf_{n\to\infty}  \frac{f(a_n)}{f(n)}\geq c >0$. Then, we have (denoting by $g$ the concave inverse function of $f$, i.e. $g\circ f=f\circ g=\id$, with $f(0)=g(0)=0$) for large $n$ and some $c'<c\leq 1$
\begin{align*}
 a_n\geq g(c' f(n)) = g(c' f(n)+ (1-c')0)\geq c'g(f(n))=c'n,
\end{align*}
which is a contradiction to $a_n\in o(n).$ In the second case we can use the same argument by considering $f=(f^p)^{1/p}$ and using the monotonicity of $x\mapsto x^{1/p}.$
\end{remark}

Formally taking $p=1$ in Theorem \ref{thm:p<1} ii) indicates that $c_\infty(1)$ might be infinite if $\limsup_{n\to\infty}\sqrt{\Var(\mu([0,n)))}=\infty.$ Unfortunately, the proof of Theorem \ref{thm:p<1} breaks down at $p=1$. However, following Liggett \cite[Section 3]{Li02} and combining this with \cite[Proposition 4.5]{LaTh09} we get

\begin{theorem}\label{thm:p=1}
Let $\mu$ be an invariant random measure of unit intensity. If $\limsup_{n\to\infty} \E[|n-\mu([0,n))|]=\infty$, then $\c_\infty(1)=\infty$. 
\end{theorem}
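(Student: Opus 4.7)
The strategy is to argue by contraposition: I would assume $\c_\infty(1) < \infty$ and deduce that $\sup_n \E[|n-\mu([0,n))|]<\infty$. Starting from this hypothesis, the existence results of \cite{HuSt13, Hu15} furnish an invariant measurable allocation $T$ with $T_*\lambda=\mu$ almost surely and $\E[|T(0)|] = \c_\infty(1)$ (alternatively one argues with an $\eps$-approximate minimizer).

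The key geometric observation is that $n-\mu([0,n))$ is entirely controlled by the $\lambda$-mass that $T$ transports across the two boundary points $0$ and $n$. Set
\begin{equation*}
A(n) := \{x\in[0,n]: T(x)\notin [0,n]\}\quad\text{and}\quad B(n) := \{x\notin [0,n]: T(x)\in[0,n]\};
\end{equation*}
mass conservation $\mu([0,n))=\lambda(T^{-1}[0,n))$ gives $n-\mu([0,n))=\lambda(A(n))-\lambda(B(n))$. Checking case-by-case which boundary point is crossed by each trajectory $x\mapsto T(x)$ shows $A(n)\cup B(n)\subset \Psi(0)\cup \Psi(n)$, where $\Psi(t):=\{x\in\R : t\in[\min(x,T(x)),\max(x,T(x))]\}$ consists of the base points whose trajectory crosses $t$. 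Hence $|n-\mu([0,n))|\leq \lambda(\Psi(0))+\lambda(\Psi(n))$.

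To estimate the expectation, I would use the identity $|T(x)-x|=\int_\R \mathbbm{1}\{t\in [\min(x,T(x)),\max(x,T(x))]\}\,dt$ together with Fubini's theorem and the invariance of $T$ (which makes the law of $T(x)-x$ independent of $x$). A one-line change-of-variables then yields $\E[\lambda(\Psi(t))] = \E[|T(0)|]$ for every $t\in\R$, so that $\E[|n-\mu([0,n))|]\leq 2\E[|T(0)|] = 2\c_\infty(1)<\infty$ uniformly in $n$, contradicting the hypothesis $\limsup_n \E[|n-\mu([0,n))|]=\infty$.

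This is the allocation-theoretic counterpart of Liggett's argument in \cite[Section 3]{Li02}; the passage between invariant allocations $\lambda\to\mu$ and shift-couplings between the stationary and Palm versions of $\mu$ is provided by \cite[Proposition 4.5]{LaTh09}. The only real obstacle I foresee is the bookkeeping for the boundary-crossing identification, which must be done with some care for general invariant random measures rather than merely point processes, and possibly reformulated in terms of couplings instead of maps if the optimizer is not attained; the subsequent expectation computation is routine once stationarity is applied.
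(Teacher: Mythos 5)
Your argument is correct, and although it arrives at essentially the same quantitative estimate as the paper, namely $\E\left[|n-\mu([0,n))|\right]\le 2\,\E[|T(0)|]=2\,\c_\infty(1)$, it does so by a genuinely different and more elementary route. The paper follows Liggett: it first derives the shift-coupling inequality in total variation, then invokes Theorem \ref{thm:prop4.2} to convert the optimal invariant allocation $\hat T$ of \cite{Hu15} into a shift-coupling $X=\hat T(0)$ between $\P$ and the Palm measure $\P_\mu$, and finally identifies the total variation term with $\E\left[|1-\mu([0,t])/t|\right]$ via the refined Campbell theorem \eqref{eq:refined campbell}. You bypass Palm calculus entirely: mass conservation of the allocation gives $n-\mu([0,n))=\lambda(A(n))-\lambda(B(n))$ directly, the boundary-crossing inclusion $A(n)\cup B(n)\subset\Psi(0)\cup\Psi(n)$ is elementary, and your identity $\E[\lambda(\Psi(t))]=\E[|T(0)|]$ is the same Fubini-plus-stationarity computation that powers the shift-coupling inequality, just carried out at the level of the transport map rather than of the flow. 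Both proofs rest on the same nontrivial external input -- the attainment of the infimum in \eqref{eq:cost2} by an invariant map (or your $\eps$-approximate fallback) -- so neither is more self-contained in that respect. What the paper's route buys is the sharper truncated bound $\E\left[|t-\mu([0,t])|\right]\le 2\E[|X|\wedge t]$, which is exploited in the closing remark to recover Theorem \ref{thm:p<1}~ii) under extra hypotheses; what yours buys is the complete avoidance of Palm measures, Theorem \ref{thm:prop4.2} and the total variation formalism. The only bookkeeping worth making explicit is that $\lambda(B(n))\le\mu([0,n))<\infty$ a.s.\ (unit intensity), so the subtraction in the mass-conservation identity is legitimate; with that noted, the argument is complete.
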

 Note that if $\mu$ satisfies a CLT in $L^1$  the expression in the last Theorem behaves like $\limsup_{n\to\infty}\sqrt{\Var(\mu([0,n)))}.$

Here are a few examples to which our results apply:

\begin{itemize}
 \item[i)] The Poisson point process has finite transport cost iff $p<1/2$. In particular, we recover the second part of Theorem 3.1 of \cite{Li02}.
\item[ii)] Invariant determinantal random point fields \cite{So00} yield a wide and well studied class of random measures to which our results apply. Many of them satisfy a central limit theorem \cite{So02}. The behaviour of $\Var(\mu([0,n)))$ can be expressed nicely via the integral kernel \cite[Lemma 6]{So00}. For instance the determinantal random point field associated to the sine kernel 
$$K(x,y)=\frac{\sin(\pi(x-y))}{\pi(x-y)}$$
satisfies $\Var(\mu([0,n)))\sim \log(n)$. Hence, the transport cost are finite iff $p<1$ (see next point for the only if statement). This behaviour of the variance is not prototypical for determinantal point processes; for each $0<\beta<1$ there is a determinantal point process with $\Var(\mu([0,n)))\sim n^\beta$, see the last paragraph of Section 3 in \cite{So00}.
\item[iii)] The $\mathsf{Sine}_\beta$ point processes introduced in \cite{VaVi09} appear as the limit of the bulk of eigenvalues of $\beta$-ensembles. $\mathsf{Sine}_\beta$ are translation invariant, satisfy a central limit theorem \cite{KrVaVi12} and $\Var(\mu([0,n)))\sim 1/\beta\log(n)$. From the large deviation result \cite{HoVa15} it is possible to deduce that the assumption of Theorem \ref{thm:p=1} is satisfied. Hence, the transport cost are finite iff $p<1.$ Note that $\mathsf{Sine}_2$ is the determinantal process associated to the sine kernel.
\end{itemize}

A natural interpretation of the results is to think of $p*:=\sup\{p, \mathfrak c_\infty(p)<\infty\}$ as a measure of regularity of the random measure. For example in the case of the sine kernel process the repulsion of the particles causes a rigid behaviour reflected in the logarithmic growth of the variance, and hence in the transport cost estimates. Similar estimates in higher dimensions could be very useful to detect possible phase transitions, e.g.\ a phase transition in the parameter $\beta$ for the equilibrium measures of the infinite dimensional system of interacting SDEs studied by Osada \cite{Os12} (in dimension one these measures are conjectured to be - and proven to be for $\beta=1,2,4$ - the $\mathsf{Sine}_\beta$ processes).
Therefore we end the introduction with the following open problem:

\begin{openproblem}
Is it possible to establish similar results in higher dimension; e.g.\ reducing the finiteness of transportation cost or otherwise to the question of aysmptotics of moments?
\end{openproblem}

\section{Preliminaries}\label{s:prel}
We write $\lambda^1=\lambda$ and denote by $(\Omega,\F,\P)$ a generic probability space on which our random elements are defined. Given a map $S$ and a measure $\rho$ we denote the push-forward of $\rho$ by $S$ by $S_*\rho=\rho\circ S^{-1}$. The set of all $\sigma$- finite measure on a space $X$ will be denoted by $\mathcal M(X)$. For a Polish space $X$ we denote by $\mathcal B(X)$ its Borel $\sigma$- algebra. For $X=X_1\times X_2$ we denote the projection on $X_i$ by $\proj_i$.

\subsection{Random measures}

 Let $\mu$ be a random $\sigma$- finite measure on $\R,$ i.e.\ a measurable map $\mu:\Omega\to\mathcal M(\R).$ We assume that $\R$ acts on $(\Omega,\F)$ via a \emph{measurable flow} $\theta_t:\Omega\to\Omega,t\in\R$, i.e.\ the mapping $(\omega,t)\mapsto \theta_t\omega$ is $\F\otimes\mathcal B(\R)-\F$ measurable with $\theta_0=\id$ and $\theta_t\circ\theta_s=\theta_{t+s}$ for $s,t\in\R$. A random measure $\mu$ on $\R$ is then called \emph{invariant} (sometimes also \emph{equivariant}) if for $A\in\mathcal B(\R), t\in\R$ and $\omega\in\Omega$ it holds that
$$ \mu(\theta_t\omega,A-t)=\mu(\omega,A).$$
A random measure $q$ on $\R\times \R$ will be called invariant if for all $A,B\in\mathcal B(\R), t\in\R$ and $\omega\in\Omega$ it holds that
$$q(\theta_t\omega,A-t,B-t)=q(\omega,A,B).$$
For an invariant measure $\mu$ we sometimes write $\theta_t\mu(\omega)=\mu(\theta_t\omega).$

The \emph{intensity} of an invariant random measure $\mu$ on $\R$ is defined as $\E[\mu([0,1))]$; $\mu$ has unit intensity if $\E[\mu([0,1))]=1$.

A measure $\P$ on $(\Omega,\F)$ is called \emph{stationary} if it is invariant under the action of $\theta$, i.e.\ $\P\circ\theta_t=\P$ for all $t\in\R.$ 
\begin{remark}
 We can think of $\theta_t\omega$ as $\omega$ shifted by $-t$, see Example 2.1 in \cite{LaTh09}.
\end{remark}
From now on we will always assume to be in the setting described above.

So, let $\P$ be a stationary measure and $\mu$ be an invariant random measure. Let $B\in\mathcal B(\R)$ with $0<\lambda(B)<\infty$. The \emph{Palm measure} $\P_\mu$ of $\mu$ (with respect to $\P$) is the measure on $(\Omega,\F)$  defined by
\begin{align*}
\P_\mu(A):=\frac{1}{\lambda(B)} \E \int_B 1_A(\theta_t\omega)~\mu(\omega,dt). 
\end{align*}
As this is independent of $B$, we can deduce by a monotone class argument the \emph{refined Campbell theorem}
\begin{align}\label{eq:refined campbell}
 \E \int f(\theta_t \omega,t)\ \mu(\omega,dt)= \int_\Omega \int_\R f(\eta,s)\ ds\ \P_\mu(d\eta)
\end{align}
for bounded and measurable $f:\Omega\times\R\to\R.$ We refer to \cite[Chapter 8]{Th00} and \cite{La10} for more details on Palm theory.

Last and Thorisson \cite[Propostion 4.5]{LaTh09} show the following remarkable result which is crucial for the proof of Theorem \ref{thm:p=1}.
\begin{theorem}\label{thm:prop4.2}
 Consider two invariant random measures $\xi$ and $\eta$ and let $T:\Omega\times\R\to\R$ be measurable and satisfy 
$$T(\theta_t\omega,s-t)=T(\omega,s)-t\quad s,t\in\R,\ \omega\in\Omega.$$
Then $\P$-a.s. $T_*\xi=\eta$ iff for all $A\in\F$
$$\P_\xi(\theta_{T(0)}\omega\in A)=\P_\eta(A).$$
\end{theorem}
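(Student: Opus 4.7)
The natural approach is to compute the Palm measure of $\nu:=T_*\xi$ and compare it with $\P_\eta$, using the refined Campbell theorem~\eqref{eq:refined campbell} together with the covariance relation $T(\theta_t\omega,s-t)=T(\omega,s)-t$. A preliminary step is to check that $\nu$ is itself an invariant random measure, which follows from a direct change of variables using invariance of $\xi$ and the covariance of $T$.

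The heart of the argument is the identity
\[
\P_\nu(A) \;=\; \P_\xi\!\big(\theta_{T(0)}\omega\in A\big) \qquad \text{for every } A\in\F.
\]
To prove it, I would unravel the Palm definition on the left, perform the change of variables $s=T(\omega,u)$, and then invoke the covariance with $t=u$ in the form $T(\omega,u)=T(\theta_u\omega,0)+u$ and $\theta_{T(\omega,u)}\omega=\theta_{T(\theta_u\omega,0)}\,\theta_u\omega$. The expression then takes the shape $\E\int H(\theta_u\omega,u)\,\xi(\omega,du)$ with $H(\omega',u)=\lambda(B)^{-1}\,1_B(T(\omega',0)+u)\,1_A(\theta_{T(\omega',0)}\omega')$. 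Applying~\eqref{eq:refined campbell} to $\xi$ and using translation invariance of Lebesgue measure to collapse the $1_B$ factor yields the claimed identity.

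Both implications then follow readily. If $T_*\xi=\eta$ $\P$-a.s., then $\P_\nu=\P_\eta$ and the identity above is precisely the Palm relation asserted by the theorem. Conversely, if that Palm relation holds for every $A\in\F$, then $\P_\nu=\P_\eta$; since an invariant random measure on $\R$ is characterised, modulo $\P$-null sets, by its Palm measure via the standard inversion formula of Palm calculus (see \cite[Ch.~8]{Th00}), one concludes $\nu=\eta$ $\P$-a.s.

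The main obstacle I expect is the book-keeping: one has to apply the covariance at the right moment so that $\theta_{T(\omega,u)}\omega$ factorises through $\theta_u$, which is exactly what makes the refined Campbell theorem applicable. The converse direction additionally relies, silently, on the inversion principle for Palm measures, which is classical but not entirely trivial.
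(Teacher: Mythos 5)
The paper does not prove this statement at all---it is quoted verbatim from Last and Thorisson \cite[Proposition 4.5]{LaTh09}---so there is no internal proof to compare against; your proposal is a correct reconstruction of the standard argument used there. The central identity $\P_{T_*\xi}(A)=\P_\xi(\theta_{T(0)}\omega\in A)$, obtained by pushing the Palm definition through the covariance relation and the refined Campbell theorem exactly as you describe, does all the work, and even the injectivity of $\mu\mapsto\P_\mu$ that you invoke for the converse needs no separate inversion formula: applying \eqref{eq:refined campbell} to $f(\eta,t)=g(\theta_{-t}\eta)h(t)$ shows directly that $\P_\nu=\P_\eta$ forces $\int h\,d\nu_\omega=\int h\,d\eta_\omega$ for all $h$ in a countable generating class, hence $\nu=\eta$ $\P$-a.s.
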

Any map $T$ as in the theorem will be called \emph{allocation rule} or invariant transport map.

\begin{example}\label{ex:key obs}
If $\P$ is stationary, the constant invariant random measure $\lambda$ has Palm measure $\P_\lambda=\P$. In particular, given an invariant random measure $\mu$ with unit intensity and an invariant transport map from $\lambda$ to $\mu$ which is measurably dependent only on the $\sigma$-algebra generated by $\mu$ Theorem \ref{thm:prop4.2} yields a \emph{shift-coupling}, see \cite{AlTh93} and \cite{Th94}, between $\P$ and $\P_\mu$, i.e.\ for all $A\in\F$ it holds that $\P[\theta_{T(0)}\omega\in A]=\P_\mu[A].$ By considering the image measure $\P\circ\mu^{-1}$ we can assume w.l.o.g.\ that $(\Omega,\F)$ is the canonical probability space $(\mathcal M(\R),\mathcal B(\mathcal M(\R))$ and $\mu$ the identity map. Then, Theorem \ref{thm:prop4.2} can be read as a shift-coupling between $\mu$ and $\P_\mu$:
$$\P[\theta_{T(0)}\mu\in \cdot]=\P_\mu[\cdot].$$
\end{example}

\subsection{Optimal transport between random measures}\label{sec:ot}

A \emph{semicoupling} between two measures $\nu$ and $\eta$ on $\R$ is a measure $q$ on $\R\times\R$ such that $(\proj_1)_*q\leq \nu$ and $(\proj_2)_*q=\eta.$ It is called \emph{coupling} if additionally $(\proj_1)_*q=\nu$. A semicoupling between $\lambda$ and a random measure $\mu$ is a random measure $q:\Omega\to\mathcal M(\R\times\R)$ such that for all $\omega\in\Omega$ the measure $q_\omega$ is a semicoupling between $\lambda$ and $\mu_\omega$. It is called coupling if additionally $q_\omega$ is a coupling between $\lambda$ and $\mu_\omega$ for all $\omega\in\Omega$. We denote the set of all couplings (resp.\ semicouplings) between $\lambda$ and $\mu$ by $\mathsf{Cpl}(\lambda,\mu)$ (resp.\ $\mathsf{SCpl}(\lambda,\mu)$).

Considering the \emph{cost-function} $c_p(x,y)=|x-y|^p$ for $0<p\leq 1$ we will be interested in the cost functional
$$\W_p(\nu,\eta):=\inf_{q\in\mathsf{SCpl}(\nu,\eta)}\E\int |x-y|^p\ q(dx,dy).$$
By standard results in optimal transport, e.g.\ \cite[Section 7.1]{Vi03}, $\W_p$ constitutes a metric as soon as $\P[\nu(\R)=\eta(\R)<\infty]=1$. 

Let $\mu$ be an invariant random measure with unit intensity. For $q\in\mathsf{SCpl}(\lambda,\mu)$ we set 
$$\mathfrak C(q)= \sup_{n\geq 1} \frac1n\int_{\R\times [0,n)}|x-y|^p q(dx,dy).$$
By \cite[Corollary 6.5]{Hu15}, we have
\begin{align*}
& \inf_{q\in\mathsf{SCpl}(\lambda,\mu)}\mathfrak{C}(q)\\
= & \liminf_{n\to\infty} \frac1n \inf_{q\in\mathsf{SCpl}(\lambda,\mu)}\int_{\R\times [0,n)}|x-y|^p q(dx,dy)\\
= &  \inf_{q\in\mathsf{SCpl}(\lambda,\mu)}\liminf_{n\to\infty} \frac1n \int_{\R\times [0,n)}|x-y|^p q(dx,dy)
=:  \mathfrak c_\infty.
\end{align*}
We sometimes write $\mathfrak c_\infty(p)$ to stress the dependence on $p$.

\begin{definition}
Let $\mu$ be an invariant random measure with unit intensity. A (semi)coupling $q$ between $\lambda$ and $\mu$ is called
\begin{itemize}
 \item \emph{asymptotically optimal} if $\mathfrak C(q)=\mathfrak c_\infty.$
\item \emph{optimal} if it is asymptotically optimal and invariant.
\end{itemize}
\end{definition}

The main results of \cite{HuSt13, Hu15} show that there is a unique optimal coupling between $\lambda$ and $\mu$ \emph{provided} that $\mathfrak c_\infty<\infty.$ In particular, eventhough there are arbitrarily many asymptotically optimal couplings there is a unique invariant one. Moreover, the optimal coupling $\hat q$ is concentrated on an invariant transport map $T$, i.e.\ $\hat q=(\id, T)_*\lambda,$ which is measurably only dependent on the $\sigma$-algebra generated by the random measure $\mu$.

\section{Proof of Theorem \ref{thm:p<1}}

\subsection{Proof of Theorem \ref{thm:p<1} i)}

The strategy is to construct a coupling between $\lambda$ and $\mu$ which is not optimal but whose cost can be controlled nicely.
To this end, we set $Z_n:=\mu([0,2^n))$ and put
$$ \bar\c_n:=2^{-n}\W_p(\1_{[0,Z_n)}\lambda,\1_{[0,2^n)}\mu).$$
By invariance of $\lambda$ and $\mu$ this equals $\frac12(\bar\c_n+\bar\c_n')$ with $Z_n':=\mu([2^n,2^{n+1}))=Z_{n+1}-Z_n$ and
$$ \bar\c_n':=2^{-n}\W_p(\1_{[2^n,2^n+Z_n')}\lambda,\1_{[2^n,2^{n+1})}\mu).$$
By the triangle inquality for $\W_p$ we have
\begin{align*}
& \bar\c_{n+1}-\bar\c_n = \bar\c_{n+1}- \frac12(\bar\c_n+\bar\c_n') \\
&= 2^{-(n+1)}\left(\W_p(\1_{[0,Z_{n+1})}\lambda,\1_{[0,2^{n+1})}\mu)-\W_p(\1_{[0,Z_n)}\lambda,\1_{[0,2^n)}\mu)- \W_p(\1_{[2^n,2^n+Z_n')}\lambda,\1_{[2^n,2^{n+1})}\mu)   \right) \\
&\leq 2^{-(n+1)} \W_p(\1_{[0,Z_{n+1})}\lambda,\1_{[0,Z_{n})}\lambda+\1_{[2^n,2^n+Z_n')}\lambda).
\end{align*}
The last expression can be estimated as follows. As $r\mapsto r^p$ is concave (recall $0<p<1$) the optimal coupling does not transport the common mass. Hence, in case that $Z_n\leq 2^n$ we have to transport mass of amount $2^n-Z_n$ at most distance $2^n-Z_n+Z_n'$. In case that $Z_n>2^n$ we have to transport mass of amount $Z_n-2^n$ at most distance $Z_n-2^n+Z_n'$. Therefore, we can estimate using H\"older's inequality
\begin{align*}
& \W_p(\1_{[0,Z_{n+1})}\lambda,\1_{[0,Z_{n})}\lambda+\1_{[2^n,2^n+Z_n')}\lambda)\\
\leq\ & \E\left[|Z_n-2^n|(|Z_n-2^n|+Z_n')^p\right]\\
\leq\ & \E\left[|Z_n-2^n|^{1+p}+|Z_n-2^n|(Z_n')^p\right]\\
\leq\ & \Var(Z_n)^{(1+p)/2}+ \Var(Z_n)^{1/2}\E[(Z_n')^{2p}]^{1/2}\\
\leq\ & \Var(Z_n)^{(1+p)/2}+ \Var(Z_n)^{1/2} (\Var(Z_n)+\E[Z_n]^2)^{p/2}\\
\leq\ & \Var(Z_n)^{(1+p)/2}+ \Var(Z_n)^{(1+p)/2}+ \Var(Z_n)^{1/2} 2^{np},
\end{align*}
where we used the identity $\Var(Z)=\E[Z^2]-\E[Z]^2$ in the second to last step and the inequality $(x+y)^p\leq x^p+y^p$ in the second as well as in the last step. Therefore, we get
$$\bar\c_{n+1}-\bar\c_n\leq 2^{-n}\Var(Z_n)^{(1+p)/2} + \frac12\Var(Z_n)^{1/2} 2^{n(p-1)},$$
which readily implies
\begin{lemma}\label{lem:cvg}
 If $\sum_{n\geq 1} 2^{-n}\Var(Z_n)^{(1+p)/2} + \frac12 \Var(Z_n)^{1/2} 2^{n(p-1)}<\infty$ then $\c_\infty(p)<\infty.$
\end{lemma}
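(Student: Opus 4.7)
The plan is to combine the telescoping estimate $\bar c_{n+1} - \bar c_n \leq 2^{-n}\Var(Z_n)^{(1+p)/2} + \tfrac12 \Var(Z_n)^{1/2} 2^{n(p-1)}$ derived just before the lemma with the identification
\[
\c_\infty(p) = \liminf_{n\to\infty}\frac{1}{n}\, \inf_{q \in \mathsf{SCpl}(\lambda,\mu)} \int_{\R \times [0, n)} |x-y|^p\, q(dx, dy)
\]
from \cite[Corollary 6.5]{Hu15}. Under the hypothesis of the lemma, summing the telescoping inequality from $1$ to $n-1$ immediately yields a uniform bound $\bar c_n \leq C$ for some finite constant $C$ independent of $n$.

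Next I transfer this bound from the finite-mass setting, where by definition $\bar c_n = 2^{-n}\W_p(\I_{[0,Z_n)}\lambda, \I_{[0,2^n)}\mu)$, to full semicouplings of $(\lambda, \mu)$. For each $n$ and $\varepsilon > 0$, take an $\varepsilon$-almost optimal semicoupling $\hat q_n$ of $\I_{[0,Z_n)}\lambda$ and $\I_{[0,2^n)}\mu$ with cost at most $2^n C + \varepsilon$, and extend it to some $q_n \in \mathsf{SCpl}(\lambda, \mu)$ by coupling the remaining $\mu$-mass on $\R \setminus [0, 2^n)$ with a measurable subset of $\lambda|_{\R \setminus [0, Z_n)}$ (for instance via the monotone rearrangement on each of the two tails). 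Since the extension puts no mass on the strip $\R \times [0, 2^n)$, the cost of $q_n$ restricted to that strip remains bounded by $2^n C + \varepsilon$.

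Plugging $q_m$ into the identification from \cite[Corollary 6.5]{Hu15} along the dyadic subsequence $n = 2^m$ gives $\c_\infty(p) \leq C + 2^{-m}\varepsilon$, and letting $m \to \infty$ and then $\varepsilon \to 0$ completes the argument. The only delicate point of the plan is the measurable existence of the extension in the middle step, but this is routine given that $\lambda$ has infinite mass on each tail of $\R \setminus [0, Z_n)$ and that $\mu$ is $\sigma$-finite.
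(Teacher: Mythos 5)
Your argument is correct and follows essentially the same route as the paper: sum the telescoping bound on $\bar\c_{n+1}-\bar\c_n$ to get $\sup_n\bar\c_n<\infty$, then pass to the full semicoupling setting and invoke \cite[Corollary 6.5]{Hu15} along the dyadic subsequence. The only difference is that you spell out the extension of an almost-optimal semicoupling of $\1_{[0,Z_n)}\lambda$ and $\1_{[0,2^n)}\mu$ to an element of $\mathsf{SCpl}(\lambda,\mu)$, a step the paper compresses into the one-line assertion $\c_n\leq\bar\c_n$.
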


\begin{proof}
 Put 
$$\c_n:=\inf_{q\in \mathsf{SCpl}(\lambda,\mu)} 2^{-n}\cdot \E\left[\int_{\R\times [0,2^n)} |x-y|^p \ q_\omega(dx,dy)\right]. $$
Then, we have $\c_n\leq \bar\c_n$ and hence $\c_\infty=\liminf_{n\to\infty}\c_n\leq\liminf_{n\to\infty}\bar\c_n=:\bar\c_\infty.$ Therefore, it is sufficient to show that $\bar\c_\infty<\infty.$ However, this follows from
$$\liminf\bar\c_n\leq \bar\c_N + \sum_{k\geq N} 2^{-k}\Var(Z_k)^{(1+p)/2} + \frac12\Var(Z_k)^{1/2} 2^{k(p-1)}$$
which is finite by assumption.
\end{proof}

\begin{proof}[Proof of Theorem \ref{thm:p<1} i)]
 Assume that $\limsup_{n\to\infty}\sqrt{\Var(\mu([0,n)))} \cdot n^{p-1}=0$. We have to verify the condition of Lemma \ref{lem:cvg}. By our assumption there is $N\in\N$ such that for all $n\geq N$  we have $\Var(Z_n)=\Var(\mu([0,2^n)))\leq 2^{2n(1-p)}.$ Hence, for $0<q<p$ we have
\begin{align*}
& \sum_{k\geq N} 2^{-k} \Var(Z_k)^{(1+q)/2} + \frac12 \Var(Z_k)^{1/2} 2^{k(q-1)}\\
\leq\ &\sum_{k\geq N} 2^{-k}2^{k(1-p)(1+q)} + \frac12 2^{k(q-1)-k(p-1)}\\
=\ & \sum_{k\geq N} 2^{k((1-p)(1+q)-1)}   +\frac12 2^{k(q-p)}<\infty,
\end{align*}
because $(1-p)(1+q)<(1-p)(1+p)=1-p^2<1$ and $q<p$. Hence, $\bar\c_\infty(q)<\infty.$
\end{proof}

\begin{remark}
 Note that we just showed that an equivalent condition in Lemma \ref{lem:cvg} would be that $\sum_{k\geq 1}\frac12 \Var(Z_k)^{1/2} 2^{k(p-1)}<\infty$. It is also not hard to see that the convergence of $\sum_{k\geq 1}\frac12 \Var(Z_k)^{1/2} 2^{k(p-1)}$ is strictly stronger than the convergence of $\sum_{k\geq N} 2^{-k}\Var(Z_k)^{(1+q)/2}$ in the sense that the convergence  of the second sum does not imply the convergence of the first.
\end{remark}

\subsection{Proof of Theorem \ref{thm:p<1} ii)}

\medskip

Denote by $q_n$ the optimal semicoupling between $\lambda$ and $\1_{[0,n)}\mu$. By Proposition 4.2 in \cite{Hu15}, there is a transport map $T_n$ and a density $\rho_n$ such that $q_n=(id,T_n)_*(\rho_n\lambda)$. Put $l_n:=\inf\{x:\rho_n(x)>0\}$ and $r_n:=\sup\{x:\rho_n(x)>0\}.$ If $l_n<0$ (resp. $n<r_n$) it follows by optimality that $\rho_n=1$ on $[l_n,0]$ (resp. $[n,r_n]$). In that case, we put 
$$a_n:= T_n(l_n/2), \quad(\text{resp.\ } b_n:=T_n(n+\frac12(r_n-n)).$$
If $l_n\geq 0$ (resp. $r_n\leq n$) we put $a_n=0$ (resp. $b_n=n$).
\medskip\\
We claim that there exists a sequence of events $(A_n)_n$ s.t.
\begin{itemize}
 \item[a)] $\liminf_{n\to\infty} \P[A_n]\geq c >0.$
\item[b)] on $A_n$ either $|l_n|\geq2\sqrt{\Var(\mu([0,n))}$ or $|r_n-n|\geq2\sqrt{\Var(\mu([0,n))}$
\item[c)] on $A_n$  there exists $1>\kappa>0$ such that for large $n$ either $a_n\geq \kappa n$ or $c_n:=n-b_n\geq \kappa n,$ i.e.\  $\liminf_{n\to\infty} (a_n+c_n)/n\geq \kappa.$
\end{itemize}
As a consequence of concavity of $r\mapsto r^p$ we have $T_n(x)\geq T_n(y)$ for all $l_n\leq x\leq y\leq 0$ (resp. $T_n(x)\leq T_n(y)$ for all $n\leq y\leq x\leq r_n$), e.g. see \cite{GaMc96}. Hence, assuming a),b) and c), we can argue
\begin{align*}
 \mathfrak c_\infty\ &\geq\ \liminf_{n\to\infty} \frac1n \E\left[\1_{A_n}\int_{[l_n,\frac{l_n}{2}]\cup[n+\frac{r_n-n}{2},r_n]} |x-T_n(x)|^p \lambda(dx)\right]\\
&\geq\ \liminf_{n\to\infty} \frac1n \kappa^p n^p  \sqrt{\Var(\mu([0,n))} \P[A_n]\\
&\geq\ \liminf_{n\to\infty} \kappa^p n^{p-1}  \sqrt{\Var(\mu([0,n))}\cdot c =\infty,
\end{align*}
by assumption. Hence, it remains to establish the claim.

We put $Y_n:=\mu([0,n))$ and set
$$\tilde A_n=\{ Y_n \geq n + 4\sqrt{\Var(Y_n)}\}.$$
By the CLT, it follows that $\liminf_{n\to\infty}\P[\tilde A_n]\geq \tilde c >0$ so that a) holds. On $\tilde A_n$ we have to transport mass of amount at least $4\sqrt{\Var(Y_n)}$ into the interval $[0,n]$. Hence, either $|l_n|\geq2\sqrt{\Var(Y_n)}$ or $|r_n-n|\geq2\sqrt{\Var(Y_n)}$ so that b) holds also. It remains to show c). We will show that on $\tilde A_n$ it is not possible that both $(a_k)_k\in  o(k)$ and $(c_k)_k\in o(k).$ Put $Y_{a_n}=\mu([0,a_n))$ and $Y_{c_n}'=\mu([b_n,n))$. Then, we have
\begin{align*}
 &\P[\tilde A_n,(a_k)_k\in o(k),(c_k)_k\in o(k)]\\
\leq\ & \P[Y_{a_n}+Y_{c_n}'\geq a_n + c_n + 2\sqrt{\Var(Y_n)}, (a_k)_k\in o(k),(c_k)_k\in o(k)],
\end{align*}
since on $\tilde A_n$ there is no transport from outside of $(T_n(l_n),T_n(r_n))$ into $(T_n(l_n),T_n(r_n))$, by concavity of the cost function, and at most half of the Lebesgue mass that is transported from outside of $[0,n]$ (the total excess is at least $4\sqrt{\Var(\mu([0,n))}$) is transported into $(\tilde a_n,T(l_n)]\cup [T(r_n),\tilde b_n)$ (where $\tilde a_n= a_n$ if $a_n>0$ and $\tilde a_n=T(l_n)$ otherwise and similarly for $\tilde b_n$). Hence,
\begin{align*}
 &\P[\tilde A_n,(a_k)_k\in o(k),(c_k)_k\in o(k)]\\
\leq\ & \P[Y_{a_n}\geq a_n +\sqrt{\Var(Y_n)}, (a_k)_k\in o(k)]+\P[Y_{c_n}'\geq c_n +\sqrt{\Var(Y_n)}, (c_k)_k\in o(k)]
\end{align*}
We consider these two terms seperately and start with the first one. We put $\P^{a}_n:=(a_n)_*(\1_{(a_k)_k\in o(k)}\P)$ and set $a_n^*:=\sup\{x:x\in \mbox{supp}(\P^a_n)\}\in o(n)$. Then, we have
\begin{align*}
  & \P[Y_{a_n}\geq a_n +\sqrt{\Var(Y_n)}, (a_k)_k\in o(k)]\\
\leq\ & \frac1{\Var(Y_n)}\E[(Y_{a_n}-a_n)^2,(a_k)_k\in o(k)]\\
\leq\ & \frac1{\Var(Y_n)} \int \Var(Y_t) \ \P^a_n(dt)\\
\leq\ & \frac{\Var(Y_{a_n*})}{\Var(Y_n)}=\frac{f(a_n^*)}{f(n)},
\end{align*}
which goes to zero by the assumption that $\mu$ has a regular variance.

The term $\P[Y_{c_n}'\geq c_n +\sqrt{\Var(Y_n)}, (c_k)_k\in o(k)]$ can be treated analogously. Hence, 
$$\P[\tilde A_n,(a_k)_k\in o(k),(c_k)_k\in o(k)]\to 0.$$
By making the sets $\tilde A_n$ slightly smaller yielding sets $A_n'$ we can therefore assume that for large $n$, say $n>N$, on $A_n'$ either $(a_k)_k\in\Theta(k)$ or $(c_k)_k\in \Theta(k)$ (since $a_n,c_n\leq n$), property b) holds and  $\liminf P[ A_n']\geq \tilde c/2=c'$. This means that for any $\omega\in A_n'$ there is $\kappa'(\omega)>0$ such that for large $n$ we have either $a_n(\omega)\geq \kappa'(\omega) n$ or $c_n(\omega)\geq \kappa'(\omega) n$. In particular, $\{\kappa'>0\}\supset \tilde A_n$ for all $n>N$. Take $\kappa>0$ such that $\P[\kappa'<\kappa]\leq \tilde c'/2$ and set $A_n:= A_n'\cap\{\kappa'\geq\kappa\}$. Then $(A_n)_{n\geq N}$ satisfy the required properties a),b) and c).

\section{Proof of Theorem \ref{thm:p=1}}
As indicated in the introduction the proof follows from the reasoning as in Section 3 of \cite{Li02} together with Proposition 4.5 of \cite{LaTh09}. Let $\P$ be some stationary measure on $\Omega$, $X$ be some real valued random variable and $\P':=\theta_X\P$, i.e.\ $\P$ and $\P'$ are \emph{shift-coupled} by $X$ (cf.\ \cite{AlTh93, Th00}). Then we have for any $f:\Omega\to [-1,1]$
\begin{align*}
& \left|\frac1t\int_0^t f(\omega)(\theta_{-s})_*\P(d\omega)-(\theta_{-s})_*\P'(d\omega))ds\right|\\
=\ &\left|\frac1t\int_0^t \E\left[f(\theta_{-s}\omega)-f(\theta_{-s+X}\omega)\right]ds \right|\\
=\ &\left|\frac1t \E\left[ \int_\R f(\theta_{-s}\omega) (\1_{[0,t]}(s) - \1_{[X,X+t]}(s) ) ds\right] \right|\\
\leq\ & \frac1t\E\left[\int_R\left|\1_{[0,t]}(s) - \1_{[X,X+t]}(s)\right|ds\right]\\
=\ & \frac2t\E[|X|\wedge t].
\end{align*}
Hence, we have derived the \emph{shift-coupling inequality}
$$\left\|\frac1t\int_0^t ((\theta_{-s})_*\P)-(\theta_{-s})_*\P') ds\right\|\leq \frac2t\E[|X|\wedge t],$$
where $\|\cdot\|$ denotes the total variation distance.

By Theorem \ref{thm:prop4.2}, any invariant transport map $T$ \emph{balancing} $\lambda$ and $\mu$, i.e.\ transporting $\lambda$ to $\mu$, induces a shift-coupling of $\P$ with its Palm-measure $\P_\mu$. By \eqref{eq:cost2}, $\c_\infty=\inf_{T, T_*\lambda=\mu}\E[|T|]$ and, by the results of \cite{Hu15}, the infimum is attained by a unique map $\hat T$ which is measurably dependent only on the $\sigma$- algebra generated by $\mu$. Hence, $X:=\hat T(0)$ shift-couples $\P$ and $\P_\mu$ and, by \eqref{eq:cost2}, we need to show that $\E[|X|]=\infty.$

By stationarity of $\P$ we have $\frac1t\int_0^t (\theta_{-s})_*\P ds=\P$ and by the refined Campbell theorem \eqref{eq:refined campbell} it follows that for any bounded and non-negative function $f:\Omega\to\R$ and $g(\omega,s):=\1_{[0,t]}(s)f(\theta_{-s}\omega)$ 
we have
\begin{align*}
\int_\Omega\int_0^t f(\omega)\ (\theta_{-s})_*\P_\mu(d\omega) ds = \int_\Omega \int_0^t f(\theta_{-s}\omega)\ \P_\mu(d\omega)ds\\
=\int_\Omega \int_\R g(\omega,s)~\P_\mu(d\omega)ds=\int f(\omega)\mu_\omega([0,t])~\P(d\omega)
 \end{align*}
Hence, we have $\int_0^t (\theta_{-s})_*\P_\mu(d\omega)ds=\mu_\omega([0,t])\P(d\omega)$. Putting everything together, we get (recall $\|fd\nu-gd\nu\|=\int|f-g|d\nu$)
\begin{align*}
 \left\|\frac1t\int_0^t ((\theta_{-s})_*\P - (\theta_{-s})_*\P_\mu) ds\right\|=\E\left[\left|1-\frac{\mu_\omega([0,t])}{t}\right|\right]\leq \frac2t\E_\mu[|X|\wedge t].
\end{align*}
By assumption, we have $\limsup_{t\to\infty}\E\left[\left|t-\mu_\omega([0,t])\right|\right]=\infty$. This implies
$$\E[|X|]\geq\limsup_{t\to\infty}\E[|X|\wedge t]\geq\limsup_{t\to\infty}\frac12\E\left[\left|t-\mu_\omega([0,t])\right|\right]=\infty, $$
which proves the result.

\begin{remark}
Following the argumentation in \cite[Section 3]{Li02} we can recover the assertion of Theorem \ref{thm:p<1} ii) in the setting of Theorem \ref{thm:p=1} assuming additionally that $\E_\mu[|t-\mu([0,t))|]\sim \sqrt{\Var(\mu([0,t)))}$ for large $t$. Indeed, we have with $Z_t=\mu([0,t))$ 
$$\frac2t\E_\mu[|X|\wedge t]\geq \frac1t\E_\mu[|t-Z_t|]\sim \frac1t \sqrt{\Var(Z_t)}.$$
By assumption, we have $\limsup_{t\to\infty} \sqrt{\Var(Z_t)}t^{p-1}\geq C>0.$ Therefore, we have for $t$ large enough
$$\sqrt{\Var(Z_t)}\geq C' t^{1-p}.$$
This implies
$$0<C'\leq \E\left[\frac{|X|\wedge t}{t^{1-p}}\right] \leq \E[|X|^p],$$
since $\frac{|X|\wedge t}{t^{1-p}}\leq |X|^p$. Assuming $\E[|X|^p]<\infty$ implies by the dominated convergence theorem that
 $$0<C'\leq\E\left[\frac{|X|\wedge t}{t^{1-p}}\right]\to 0\quad \text{ as }t\to\infty,$$
which is a contradiction.
\end{remark}

\end{document}